

\documentclass[reqno, 12pt]{amsart} 


\addtolength{\textwidth}{3pc}
\addtolength{\textheight}{5pc}
\addtolength{\hoffset}{-1.5pc} 
\addtolength{\voffset}{-2.5pc} 




\usepackage{amsmath, amssymb} 
\usepackage{amscd}            
\usepackage{amsxtra}          
\usepackage{upref}            


\usepackage[mathscr]{eucal}

\usepackage{mathtools}
\usepackage{verbatim}




\theoremstyle{plain}

\newtheorem{theorem}{Theorem}
\newtheorem{proposition}[theorem]{Proposition}
\newtheorem{lemma}[theorem]{Lemma}
\newtheorem{corollary}[theorem]{Corollary}




\theoremstyle{remark}
\newtheorem{remark}[theorem]{Remark}


\numberwithin{equation}{section}
\numberwithin{theorem}{section}
\newcommand{\be}%
  {\protect\setcounter{equation}{\value{subsubsection}}}  
\newcommand{\ee}%
  {\protect\setcounter{subsubsection}{\value{equation}}}


\DeclareMathAlphabet\BOONDOX{U}{rsfso}{m}{n}

\newcommand{\Q}{{\mathbb Q}}
\newcommand{\R}{{\mathbb R}}
\newcommand{\C}{{\mathbb C}}
\newcommand{\sel}{\BOONDOX{S}}

\newcommand{\esel}{{\sel}^{\#}}

\newcommand{\aselp}{{\mathfrak A}}
\newcommand{\asel}{{\aselp}^{\#}}
\newcommand{\bselp}{\BOONDOX{L}}

\newcommand{\aq}{{\mathbb A}_{\mathbb Q}}
\newcommand{\ak}{{\mathbb A}_K}

\newcommand{\gln}{{\rm GL}_n({\mathbb A}_{\mathbb Q})}

\newcommand{\glnak}{{\rm GL}_n(\ak)}

\newcommand{\hktz}{H(\alpha,T)}

\newcommand{\re}{\text{Re}}



\begin{document}


\title{On the absolute convergence of automorphic Dirichlet series}
\author{Ravi Raghunathan}

\address{Department of Mathematics \\ 
         Indian Institute of Technology Bombay\\
         Mumbai,\enspace  400076\\ India}          
\email{ravir@math.iitb.ac.in}
\subjclass[2010]{11F66, 11M41}
\keywords{Selberg class, automorphic $L$-functions, Omega results for summatory functions, abscissa of absolute convergence of Dirchlet series}

\begin{abstract} Let $F(s)=\sum_{n=1}^{\infty}\frac{a_n}{n^s}$ be a Dirichlet series in the axiomatically defined class $\asel$. The class $\asel$ is known to contain the extended Selberg class $\esel$, as well as all the $L$-functions of automorphic forms on $GL_n/K$, where $K$ is a number field. Let $d$ be the degree of $F(s)$.
We show that $\sum_{n<X}|a_n|=\Omega(X^{\frac{1}{2}+\frac{1}{2d}})$, and hence, that the abscissa of absolute convergence of $\sigma_a$ of $F(s)$ must satisfy $\sigma_a\ge 1/2+1/2d$.

\end{abstract}
\vskip 0.2cm

\maketitle


\markboth{RAVI RAGHUNATHAN}{On the absolute convergence of automorphic Dirichlet series}


\section{Introduction}\label{introduction}
In \cite{Ragh20} we introduced a class of Dirichlet series $\asel$ which is known to contain a very large number of $L$-functions attached to automorphic forms and also (strictly) contains the extended Selberg class $\esel$
of Kaczorowski and Perelli defined in \cite{KaPe99}. Associated to each Dirichlet series $F(s)=\sum_{n=1}^{\infty}\frac{a_n}{n^s}$ in $\asel$ is a non-negative real number - its degree $d_F$. We denote the subset of Dirichlet series of degree $d$ in $\asel$ by $\asel_d$. We state the main results of this paper first, referring the reader to Section \ref{defns} for the precise definitions of the degree $d_F$ and other terms appearing below.
\begin{theorem}\label{mainthm} Let $F(s)$ be an element of 
$\asel_d$ with $d\ge 1$. Then,
\begin{equation}\label{absconv}
\sum_{n<X}|a_n|=\Omega(X^{\frac{1}{2}+\frac{1}{2d}}).
\end{equation}
In particular, the abscissa of absolute convergence $\sigma_a$ satisfies $\sigma_a\ge1/2+1/2d$.
\end{theorem}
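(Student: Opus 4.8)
The plan is to argue by contradiction: I will assume $\sum_{n<X}|a_n|=o\bigl(X^{1/2+1/2d}\bigr)$ and derive a contradiction with the functional equation. This proves \eqref{absconv}, and the assertion $\sigma_a\ge 1/2+1/2d$ then follows at once, since for a Dirichlet series with non‑negative coefficients one has $\sigma_a=\limsup_{X\to\infty}\log\bigl(\sum_{n<X}|a_n|\bigr)/\log X$.

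First I would dispose of the case in which $F$ has a pole (by the axioms, necessarily at $s=1$): if the pole has order $m\ge1$ then Perron's formula gives $\sum_{n<X}a_n=X\,Q(\log X)+o(X)$ with $Q$ a nonzero polynomial, whence $\sum_{n<X}|a_n|\ge\bigl|\sum_{n<X}a_n\bigr|\gg X\gg X^{1/2+1/2d}$ (using $d\ge1$), contradicting the hypothesis. So assume $F$ is entire. Next, from the hypothesis, partial summation shows that $\sum_n|a_n|n^{-s}$, hence $F(s)$ and the conjugate series $\widetilde F(s):=\sum_n\overline{a_n}n^{-s}$, converge absolutely (and $F$ is bounded) on the lines $\Re s=\sigma>\tfrac12+\tfrac1{2d}$; equivalently $\widetilde F(1-s)=\sum_n\overline{a_n}n^{\,s-1}$ is bounded on $\Re s<\tfrac12-\tfrac1{2d}$. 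Writing the functional equation as $F(s)=\mathcal X(s)\,\widetilde F(1-s)$, with $|\mathcal X(\sigma+it)|\asymp(1+|t|)^{d(1/2-\sigma)}$ by Stirling, this gives $|F(\sigma+it)|\ll_\sigma(1+|t|)^{d(1/2-\sigma)}$ on $\Re s<\tfrac12-\tfrac1{2d}$; and since $(s-1)^mF(s)$ is entire of finite order, Phragmén–Lindelöf across the strip $\tfrac12-\tfrac1{2d}\le\Re s\le\tfrac12+\tfrac1{2d}$ yields $\mu(\sigma)\le\tfrac d2\bigl(\tfrac12+\tfrac1{2d}-\sigma\bigr)$ there, so in particular $F(\tfrac12+it)\ll_\varepsilon(1+|t|)^{1/4+\varepsilon}$.

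The heart of the matter — and what I expect to be the genuine obstacle — is to turn this into a contradiction. The tool I would reach for is the explicit (Voronoi‑type) summation formula attached to the functional equation: for a partial sum of $(a_n)$, regularised if necessary by a Riesz kernel of suitable order to guarantee convergence, one has an identity of the shape $\sum_{n<X}a_n\sim(\text{residual term})+\sum_{m\ge1}\overline{a_m}\,m^{-1}\mathcal K(mX)$, where the kernel $\mathcal K$ produced by $\mathcal X$ is of Bessel type with $\mathcal K(Y)\asymp Y^{1/2-1/2d}$ (times an oscillation of ``frequency'' $\asymp Y^{1/d}$); it is the exponent $\tfrac12-\tfrac1{2d}$ here that is ultimately responsible for the exponent $\tfrac12+\tfrac1{2d}$ in the theorem. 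Under the hypothesis the dual series is dominated by $X^{1/2-1/2d}\sum_m|a_m|\,m^{-1/2-1/2d}$, and one must show this over‑collapses — that the dual side cannot be this small — which should follow from a suitable non‑vanishing/second‑moment input: on one hand $\int_{-T}^{T}|F(\tfrac12+it)|^2\,dt\gg T$ for all large $T$, because $F\not\equiv0$ satisfies a functional equation (via almost periodicity of $F$ where the series converges, carried along by the continuation, or via the unfolded integral of $F(s)\widetilde F(1-s)$ against a test function); on the other hand the functional equation and the absolute convergence above express $F$ on the critical line through its dual Dirichlet polynomial of length $\asymp T^{d/2}$, and a mean‑value estimate (Montgomery–Vaughan) then bounds $\int_{-T}^{T}|F(\tfrac12+it)|^2\,dt$ in terms of $\sum_{n\le T^{d/2}}|a_n|^2/n$ plus an off‑diagonal term, which the little‑oh hypothesis would make too small. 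Carrying this comparison out so that the hypothesis really is contradicted — in effect, ruling out total cancellation in the dual series (or in the mean value) without invoking an Euler product or a Ramanujan‑type bound, neither of which is available in $\asel$, and keeping careful track of the off‑diagonal/error terms — is where the bulk of the work, and the real subtlety, lies.
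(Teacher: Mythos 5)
There is a genuine gap, and you have located it yourself: everything hinges on ``ruling out total cancellation in the dual series,'' and your proposal does not actually do this. The specific mechanism you suggest does not work under your hypothesis. The assumption $\sum_{n<X}|a_n|=o(X^{1/2+1/2d})$ gives no useful \emph{upper} bound on $\sum_{n\le N}|a_n|^2/n$ (individually one only gets $|a_n|=o(n^{1/2+1/2d})$, so with $N\asymp T^{d/2}$ the Montgomery--Vaughan bound can be as large as $T^{(d+1)/2}$, which does not contradict $\int_{-T}^{T}|F(\tfrac12+it)|^2dt\gg T$); and the lower bound $\gg T$ itself is not established for general members of $\asel$, where no Euler product, no Ramanujan-type bound, and not even $\sigma_a\le 1$ are available. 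Likewise, the Voronoi kernel you invoke is exactly the point where the gamma factor of $\asel$ (which may contain inverse $\Gamma$-factors and arbitrary $\mu_j$) causes trouble; this is the Fox-hypergeometric analysis that the paper explicitly avoids. The paper's resolution of the cancellation problem is different and is the one idea your sketch is missing: one forms Soundararajan's transform $H(\alpha,T)=\alpha^{-1/2}\int_{2\alpha T}^{3\alpha T}F(\tfrac12+it)e^{idt\log[t/(e\alpha)]-i\pi/4}dt$ and evaluates it twice. Evaluated directly by stationary phase it produces the additive twist $\sum_{T^d<n<4T^d}a_n n^{-1/2+1/2d}e^{-id\alpha n^{1/d}}$; evaluated after applying the functional equation, the frequency $\alpha$ is \emph{chosen} so that $CQ^2\alpha^d=m$ for some fixed $m$ with $a_m\ne0$, whence exactly one term of the dual sum is non-oscillatory and contributes $\kappa a_mT^{1+iA}$ while every other term is $O(1)$. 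That matched choice of $\alpha$ is what forces a main term of size $T^{1/2+1/2d}$ in the twisted sum and hence the $\Omega$-result; nothing in your outline substitutes for it.

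Two secondary points. First, your disposal of the polar case assumes the pole is at $s=1$; in $\asel$ the axioms only require $P(s)F(s)$ entire for some polynomial $P$, and poles may sit at arbitrary points (e.g.\ $\zeta(s+\tfrac12)\zeta(s-\tfrac12)$), where your Perron argument gives nothing. The paper needs no such case distinction: the possible poles only enter through the term $r_1(t,X)$, which decays exponentially on the integration range $t\in[2\alpha T,3\alpha T]$. Second, the convexity bound $F(\tfrac12+it)\ll t^{1/4+\varepsilon}$ you derive is never needed in the paper's argument; the smoothed Dirichlet-series expansion of Lemma \ref{basiclemma} replaces it.
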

The following corollary covers the cases of greatest interest.
\begin{corollary}\label{automorphiclfn}
Let $L(s,\pi)=\sum_{n=1}^{\infty}\frac{a_n}{n^s}$ be the standard $L$-function associated to a unitary automorphic representation $\pi$ of $\gln$, where $\aq$ denotes the ad\`eles over $\Q$. Then, 
\[
\sum_{n<X}|a_n|=\Omega(X^{\frac{1}{2}+\frac{1}{2n}}).
\]
\end{corollary}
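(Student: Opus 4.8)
Since it is established in \cite{Ragh20} that the standard $L$-function of a unitary automorphic representation of $\gln$ belongs to $\asel_n$, Corollary~\ref{automorphiclfn} is exactly the case $F=L(\cdot,\pi)$, $d=n$ of Theorem~\ref{mainthm}, so it suffices to prove the theorem. I would argue by contradiction, assuming $\sum_{n<X}|a_n|=o\!\big(X^{\tfrac{1}{2}+\tfrac{1}{2d}}\big)$. Write $c=\tfrac{1}{2}+\tfrac{1}{2d}$ and $G(\sigma)=\sum_n|a_n|n^{-\sigma}$. The hypothesis forces the abscissa of absolute convergence to satisfy $\sigma_a\le c$, so $G(\sigma)<\infty$ for $\sigma>c$, and partial summation sharpens this to $(\sigma-c)G(\sigma)\to 0$ as $\sigma\downarrow c$; moreover $|F(\sigma+it)|\le G(\sigma)$ whenever $\sigma>c$.

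First I would dispose of the case in which $F$ has a pole at $s=1$. There $\sigma_a\ge 1\ge c$ already, so the Dirichlet series cannot converge absolutely in any larger half-plane; more quantitatively, shifting the contour in $\sum_n a_n e^{-n/X}=\frac{1}{2\pi i}\int_{(\kappa)}F(s)\Gamma(s)X^s\,ds$ far to the left — legitimate because $F$ is of finite order and $\Gamma$ decays exponentially on vertical lines — picks up the residue $\sim\rho X$ at $s=1$ and contributions of size $O(1)$ from the trivial poles of $\Gamma$ and from the remaining integral, whence $\sum_n|a_n|e^{-n/X}\ge\big|\sum_n a_n e^{-n/X}\big|\gg X$, contradicting the hypothesis (recall $c\le 1$). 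Hence $F$ is entire.

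Now I would bring in the functional equation $\Phi(s)=\omega\,\overline{\Phi(1-\bar s)}$, $\Phi(s)=Q^s\prod_j\Gamma(\lambda_j s+\mu_j)F(s)$, $2\sum_j\lambda_j=d$, in the form $F(s)=X_F(s)\overline F(1-s)$. Stirling's formula gives $|X_F(\beta+it)|\asymp_\beta\big(Q^2|t|^d\big)^{\tfrac{1}{2}-\beta}$ as $|t|\to\infty$; combining this with $|F(\sigma+it)|\le G(\sigma)$ for $\sigma>c$ and the Phragmén--Lindelöf principle (applicable since $F$ is of finite order) controls the growth of $F$ throughout $\C$, and in particular shows $F$ to be small in an integrated sense on and to the right of the line $\Re s=c$. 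The plan is then to feed this into a smoothed explicit formula: for a well chosen $\phi\in C_c^\infty(0,\infty)$, write $\sum_n a_n\phi(n/X)=\frac{1}{2\pi i}\int_{(\kappa)}F(s)\widetilde\phi(s)X^s\,ds$, move the contour across (no poles, as $F$ is entire), substitute $F(s)=X_F(s)\overline F(1-s)$ and expand $\overline F(1-s)$ as a Dirichlet series, obtaining a Voronoi-type identity $\sum_n a_n\phi(n/X)=\sum_m\frac{\overline{a_m}}{m}\,\Phi_\flat(mX)$ in which $\Phi_\flat$ is the gamma-factor Hankel transform of $\phi$, whose essential support a stationary-phase analysis pins down. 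The contradiction is to be extracted by playing the direct and the reflected expressions of these smoothed sums against each other — equivalently, by examining the mean square of $F$ on the line $\Re s=1-c=\tfrac{1}{2}-\tfrac{1}{2d}$, where a degree-$d$ gamma factor inserts the amplification $|X_F|^2\asymp|t|^{1/d}$ that one must show is incompatible with the smallness of $\sum_{n<X}|a_n|$.

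The step I expect to be the real obstacle is precisely this last one. The functional equation, Stirling's formula, Phragmén--Lindelöf and the smoothed explicit formula are all routine, but on their own they only produce estimates — such as the convexity-type bound $F(\tfrac{1}{2}+it)=o(|t|^{1/4})$ — that are perfectly consistent with the hypothesis. Turning the analysis into a genuine contradiction requires using the absolute convergence and the fact that the degree is \emph{exactly} $d\ge 1$ at the same time, i.e.\ that reflection through the gamma factor produces precisely the amplification $|t|^{d(\sigma-\tfrac{1}{2})}$, and it requires handling with care the borderline possibility $\sigma_a=c$, in which $G$ need not extend continuously to $\Re s=c$. The case $d=1$ (where $c=1$) has to be treated separately; there it reduces to classical facts, essentially the Kaczorowski--Perelli classification of the degree-one elements of $\esel$ together with the corresponding description of $\asel_1$.
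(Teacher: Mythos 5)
Your opening reduction is precisely the paper's entire proof of Corollary \ref{automorphiclfn}: one cites that $L(s,\pi)\in\asel$ with $d_{L(\cdot,\pi)}=n$ and applies Theorem \ref{mainthm} verbatim. Read that way, your treatment of the corollary itself is correct and identical to the paper's.

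However, you then undertake to prove Theorem \ref{mainthm}, and there the proposal has a genuine gap --- one you yourself flag: the contradiction is never actually extracted, and the tools you list (Stirling, Phragm\'en--Lindel\"of, a smoothed explicit formula, a Voronoi-type reflection, mean square on $\re(s)=\frac12-\frac1{2d}$) are conceded to yield only convexity-type bounds compatible with the hypothesis $\sum_{n<X}|a_n|=o(X^{\frac12+\frac1{2d}})$. The paper's mechanism is different and is exactly the missing device: it evaluates Soundararajan's transform $\hktz=\frac{1}{\sqrt{\alpha}}\int_{2\alpha T}^{3\alpha T}F(1/2+it)\,e^{idt\log[t/(e\alpha)]-i\pi/4}\,dt$ in two ways. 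Directly, stationary phase converts it into the additive twist $\sum_{T^d<n<4T^d}a_n\,d^{-1/2}n^{\frac{1}{2d}-\frac12}e^{-id\alpha n^{1/d}}$ plus $O(T^{(d+\rho)(\sigma_a-\frac12+\varepsilon)})$; after the functional equation, the factor $(n^{-1}CQ^2\alpha^d)^{-it}$ is non-oscillating for exactly one index $n=m$ once $\alpha$ is fixed by $CQ^2\alpha^d=m$ with $a_m\neq 0$, so the dual side produces a genuine main term $\kappa a_mT^{1+iA}$ rather than an upper bound. Equating the two evaluations forces $\sum_{T<n<4T}|a_n|\gg T^{\frac12+\frac1{2d}}$ whenever $\sigma_a\le \frac12+\frac1{2d}$, which is the contradiction; no amplifier of this kind appears in your sketch. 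Two further structural problems: elements of $\asel$ may have finitely many poles at \emph{arbitrary} locations, so your dichotomy ``pole at $s=1$ or entire'' does not cover the class (the paper absorbs poles into the exponentially small term $r_1(t,X)$ of Lemma \ref{basiclemma}); and no Kaczorowski--Perelli-type classification of degree-one elements is available for $\asel_1$ (only for the Selberg and extended Selberg classes), so your proposed separate treatment of $d=1$ rests on an unproved input, whereas the paper's argument runs uniformly for all $d\ge 1$.
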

Indeed, it is known that $L(s,\pi)\in \asel$ and that its degree is $n$, so the corollary follows immediately from
the theorem. 

The corollary would appear to be new even for the $L$-functions of Maass forms associated to higher level congruence subgroups (for which $n=2$). 
Theorem \ref{mainthm} was known previously for the extended Selberg class $\esel$ (see Corollary 2 of \cite{KaPe2005}). Elements in $\esel$ are required to satisfy the analogue of the Generalised Ramanujan Conjecture 
(GRC) at infinity which is equivalent to the Selberg Eigenvalue Conjecture
for Maass eigenforms. Since these conjectures are very far from being established, Theorem \ref{mainthm} and Corollary \ref{automorphiclfn} are not subsumed by the earlier results. 

In addition, we note that elements of $\asel$ may have (a finite number of) poles at arbitrary locations and satisfy a more general functional equation than those of $\esel$. {\it A priori} they may have an arbitrary abscissa of absolute convergence, in contrast to the requirement $\sigma_a(F)\le 1$ for elements of $\esel$. Many other $L$-functions are known to belong to $\asel$ (but are not known to belong to $\esel$). These include the exterior square, symmetric square and tensor product $L$-functions associated to (unitary) automorphic representations of $\glnak$, where $\ak$ denotes the ad\`eles over a number field $K$. The $L$-functions of half integral weight forms and Siegel modular forms also belong to $\asel$, but in general, do not belong to $\esel$. Theorem \ref{mainthm} thus applies to a substantially larger class of examples.

The proof of Theorem \ref{mainthm} uses a transform introduced by Soundararajan in \cite{Sound05} for the case $d=1$ in the context of the Selberg class $\sel$, but improves on the relevant stationary phase techniques following the arguments in \cite{BaRa20}. These allow us to prove
an asymptotic formula for the ``standard addtive twist''
\[ 
F(0,\alpha,1/d):=\sum_{T<n<4T}^{\infty}a_ne^{-id\alpha n^{1/d}}\sim c_0T^{1/2+1/2d}+o(T^{1/2+1/2d})
\]
for some constant $c_0$, when $\sigma_a<1/2+1/d-\delta$ for any $\delta>0$
(see equation \eqref{finalequality}), from which Theorem \ref{mainthm} follows easily. 

The asymptotic formula for the standard additive twist of elements in $\esel$ was proved in \cite{KaPe2005} {\it without any further assumptions}. The proof invokes the properties of Fox hypergeometric functions and other complex analytics techniques. While our method is unable to recover this more subtle statement, it does produce a completely different and shorter proof of Theorem \ref{mainthm} even for the class $\esel$.

In \cite{KaPe15} it is shown that the conclusion of Theorem \ref{mainthm} holds for series which are polynomials in the elements of the Selberg class $\sel$. It is likely that the same ideas work for polynomials in the elements of $\aselp$, the class of series in $\asel$ which have an Euler product. However, we do not attempt this here.

\section{Some basic definitions}\label{defns}

The class $\asel$ was defined in \cite{Ragh20} as follows. For $s\in \C$, we
write $s=\sigma+it$, where $\sigma,t\in \R$. Let $F(s)\ne 0$ be a meromorphic function on $\C$. 
We consider the following conditions on $F(s)$.
\begin{enumerate}
\item [(P1)] The function $F(s)$ is given by a
Dirichlet series $\sum_{n=1}^{\infty}\frac{a_n}{n^{s}}$ 
with abscissa of absolute convergence $\sigma_a\ge 1/2$.
\item[(P2)] There is a polynomial $P(s)$ such that $P(s)F(s)$ 
extends to an entire function, and such that given any
vertical strip $\sigma_1\le \sigma \le \sigma_2$, there is some $M\in\R$ such that $P(s)F(s)\ll (1+t)^M$.
\item[(P3)] There exist a real number $Q>0$, a complex number $\omega$ such that 
$\vert\omega\vert=1$, and a function $G(s)$ of the form
\begin{equation}\label{gammafactor}
G(s)=\prod_{j=1}^{r}\Gamma(\lambda_j s+\mu_j)\prod_{j'=1}^{r'}\Gamma(\lambda_{j^{\prime}}^{\prime}s+\mu_{j^{\prime}}^{\prime})^{-1},
\end{equation}
where $\lambda_j, \lambda_{j^{\prime}}^{\prime}>0$ are real numbers, $\mu_j, \mu_{j^{\prime}}^{\prime}\in \C$, 
and $\Gamma(s)$ denotes the usual gamma function, such that
\begin{equation}\label{fnaleqn}
\Phi(s):=Q^{s}G(s)F(s)=\omega\overline{\Phi(1-\bar{s})}.
\end{equation}
\end{enumerate}
We will denote by $\asel$ the set of (non-zero) meromorphic functions satisfying (P1)-(P3). We set 
$d_F=2\sum_{j=1}^{r}\lambda_j-2\sum_{j'=1}^{r^{\prime}}\lambda_{j^{\prime}}^{\prime}$. Theorem 2.1 of \cite{Ragh20} shows that $d_F$ does not depend on the choice of the functions $G(s)$ that appear in \eqref{fnaleqn}. The number $d_F$ is called the 
degree of the function $F(s)$. The set of all functions $F(s)\in \asel$ with $d_F=d$ will be denoted by $\asel_d$.

The class $\esel$ is defined as the set of series $F(s)\in \asel$ satisfying the conditions $\sigma_a\le 1$, $P(s)=(s-1)^m$ for some $m\ge 0$,
$r^{\prime}=0$, and $\mu_j\ge 0$ for all $1\le j\le r$, define. As we have outlined in the introduction, 
even when we expect a 
series $F(s)$ to belong to $\esel$, we can only rarely prove that this is the case, since major conjectures like the GRC at infinity are involved. In addition, there are a large number of examples that belong to $\asel$, but do not belong to $\esel$. Two simple examples to keep in mind are $\zeta(2s-1/2)$ and $\zeta(s+1/2)\zeta(s-1/2)$.

More detailed rationales for working in $\asel$ rather than in $\esel$, or in the class $\bselp$ introduced by A. Booker in \cite{Booker2015}, may be found in \cite{Ragh20} and \cite{BaRa20}.

\section{Preliminaries}
In this section we record a few facts from \cite{BaRa20} which we will need 
for our proof. We first fix the following notation. For a complex function 
$f(s)$ we define $\tilde{f}(s)=\overline{f(\bar{s})}$.

Let $z=x+iy$, and assume that $-\pi+\theta_0<\arg(z+it)<\pi-\theta_0$ for some 
$\theta_0>0$. From Section 2.2 of \cite{BaRa20} (see equations (2.1)-(2.4) of that paper), we retrieve
\begin{equation}\label{fourthgammaest}
\frac{\tilde{G}(1-x-it)}{G(x+it)}
= (Ce^{-d}t^{d})^{(\frac{1}{2}-x)} e^{-itd\log\frac{t}{e}}t^{iA}e^{iB}C^{-it}\cdot 
(1+O(1/t)),
\end{equation}
where 
\[
A=-i((\bar{\mu}-\mu)-(\bar{\mu^{\prime}}-\mu^{\prime})),\quad C=\prod_{j,j'=1}^{r,r'}{\lambda_j}^{2\lambda_j}
{\lambda_{j^{\prime}}^{\prime}}^{-2\lambda_{j^{\prime}}^{\prime}}, 
\]
and
\begin{flalign}
B=&-i\left(\sum_{j=1}^r(\bar{\mu}_j-\mu_j)\log\lambda_j-\sum_{j'=1}^r(\overline{\mu_{j^{\prime}}^{\prime}}-\mu_{j^{\prime}}^{\prime})\log\lambda_{j,}\right)\nonumber\\
&-(\mu-\bar{\mu})+(\mu^{\prime}-\bar{\mu^{\prime}})-((\mu-\bar{\mu})-(\mu^{\prime}-\bar{\mu^{\prime}})+d/2)\frac{\pi}{2},
\end{flalign}
with
\[
\mu=\sum_{j=1}^{r}\mu_j\quad\text{and}\quad \mu^{\prime}=\sum_{j^{\prime}=1}^{r}\mu_{j^{\prime}}^{\prime}.
\]
Note that $A\in \R$ and $C>0$. Replacing $x+it$ by $x+it+w$ ($w=u+iv$) in \eqref{fourthgammaest}, 
and taking absolute values, we obtain
\begin{equation}\label{zerothgammaest}
\frac{\tilde{G}(1-x-it-w)}{G(x+it+w)}\ll (1+|t+v|)^{-d(x-1/2+u)}.
\end{equation}

Additionally, we will need the following lemma from \cite{BaRa20} which allows
us to pass from $F(s)$ to an everywhere convergent Dirichlet series.
\begin{lemma}\label{basiclemma} Let $w=u+iv$, $z=x+iy$, $p>0$ and $d>0$.
If $F(s)\in \asel_d$ is holomorphic at $s=z+it$ and $0<\eta<1-x+p-\sigma_a$, we have
\begin{equation}\label{basiclemmaeqn}
F(z+it)=\sum_{n=1}^{\infty}\frac{a_ne^{-(n/X)^{p}}}{n^{z+it}}
+r_1(t,X)+r_2(t,X),
\end{equation}
where $r_1(t,X):=O(X^{\sigma_a-x}e^{-|t|/p})$ 
is identically zero if $F(z)$ is entire, and 
\begin{equation}\label{rtwoz}
r_2(t,X):=\frac{1}{2\pi i p}\int_{u=-p+\eta}F(z+it+w)X^{w}\Gamma(w/p) dw
\ll O(t^{d(\frac{1}{2}+p-x-\eta)}X^{-p+\eta}),
\end{equation}
where $u=-p+\eta$ is a line on which $F(z+it+w)$ is holomorphic.
\end{lemma}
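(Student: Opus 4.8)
The plan is to establish the stated asymptotic for $r_2(t,X)$ by applying the gamma-factor estimate \eqref{zerothgammaest} together with the functional equation, and to handle $r_1(t,X)$ via a standard contour shift for the smoothed Dirichlet series. First I would recall how \eqref{basiclemmaeqn} is obtained: starting from the Mellin identity
\[
e^{-(n/X)^p}=\frac{1}{2\pi i p}\int_{(c)}\Gamma(w/p)\left(\frac{X}{n}\right)^{w}dw
\]
valid for $c>0$, one multiplies by $a_n n^{-z-it}$ and sums, interchanging sum and integral in the region of absolute convergence (legitimate provided $x+c>\sigma_a$, i.e. $c>\sigma_a-x$, which is why the hypothesis $0<\eta<1-x+p-\sigma_a$ forces the relevant line to sit to the right of $\sigma_a-x$ after the shift). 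This yields
\[
\sum_{n=1}^{\infty}\frac{a_ne^{-(n/X)^p}}{n^{z+it}}=\frac{1}{2\pi i p}\int_{(c)}F(z+it+w)X^{w}\Gamma(w/p)\,dw,
\]
and the task is then to move the line of integration from $\Re(w)=c$ (large) to $\Re(w)=-p+\eta$.

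The main step is the contour shift. Between the two vertical lines the integrand $F(z+it+w)X^{w}\Gamma(w/p)$ has poles coming from two sources: the pole of $\Gamma(w/p)$ at $w=0$, which contributes the residue $p\,F(z+it)$ (after dividing by $p$, exactly $F(z+it)$, the main term on the left of \eqref{basiclemmaeqn}); and the at most finitely many poles of $F$ inside the strip, i.e. zeros of the polynomial $P(s)$ shifted by $z+it$ — these are collected into $r_1(t,X)$. To see that $r_1(t,X)=O(X^{\sigma_a-x}e^{-|t|/p})$, note each such residue is a polynomial in $\log X$ times $X^{w_0}$ for the pole location $w_0$, with $\Re(w_0)=\sigma_{\text{pole}}-x\le\sigma_a-x$ by (P1), while the factor $\Gamma(w_0/p)$ with $w_0$ having imaginary part comparable to $-t$ decays like $e^{-\pi|t|/(2p)}$ by Stirling; absorbing constants gives the claimed bound, and of course if $F$ is entire there are no such poles and $r_1\equiv 0$. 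The remaining integral along $\Re(w)=-p+\eta$ is $r_2(t,X)$, and its definition in \eqref{rtwoz} is exactly this line integral; so what must be proved is the bound $r_2(t,X)\ll t^{d(1/2+p-x-\eta)}X^{-p+\eta}$.

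For that bound I would use the functional equation \eqref{fnaleqn} to write, on the line $u=-p+\eta$,
\[
F(z+it+w)=\omega\, Q^{1-2(z+it+w)}\,\frac{\tilde G(1-z-it-w)}{G(z+it+w)}\,\tilde F(1-z-it-w),
\]
and observe that on this line $\Re(1-z-it-w)=1-x+p-\eta>\sigma_a$ by hypothesis, so $\tilde F(1-z-it-w)$ is bounded (by absolute convergence of its Dirichlet series). The factor $Q^{1-2(z+it+w)}$ contributes only a bounded quantity in modulus along the line. The gamma ratio is controlled by \eqref{zerothgammaest} with $x$ there equal to $x$ here and $u=-p+\eta$: this gives $|G\text{-ratio}|\ll(1+|t+v|)^{-d(x-1/2-p+\eta)}=(1+|t+v|)^{d(1/2+p-x-\eta)}$. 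Finally $|X^{w}|=X^{-p+\eta}$ pulls out of the integral, and the $dv$-integral converges because $\Gamma(w/p)=\Gamma((-p+\eta+iv)/p)$ decays exponentially in $|v|$ like $e^{-\pi|v|/(2p)}$, which kills the polynomial growth $(1+|t+v|)^{d(1/2+p-x-\eta)}$ and leaves a convergent integral whose value is $O\big((1+|t|)^{d(1/2+p-x-\eta)}\big)$ after shifting the $v$-variable. Assembling these factors yields exactly \eqref{rtwoz}.

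The step I expect to be most delicate is the justification that \eqref{zerothgammaest} is valid uniformly along the entire line $u=-p+\eta$, including the range where $|v|$ is large compared with $t$ — there the argument $\arg(z+it+w)$ may leave the sector $(-\pi+\theta_0,\pi-\theta_0)$ underlying \eqref{fourthgammaest}, so one must either split the integral at $|v|\sim t$ (using \eqref{zerothgammaest} for the central part and a crude Stirling bound plus the exponential decay of $\Gamma(w/p)$ for the tails) or invoke the form of \eqref{zerothgammaest} recorded in \cite{BaRa20} which already incorporates this splitting. A secondary technical point is checking that the pole of $F$ at $s=1$ (when present) and any other poles genuinely lie strictly between the two contours for the relevant ranges of $t$, so that the residue bookkeeping producing $r_1$ is correct; for $|t|$ bounded this needs the mild observation that the contributing poles are finite in number and $X^{\sigma_a - x}e^{-|t|/p}$ can absorb the $\log X$ powers and the $O(1)$ dependence on these finitely many locations.
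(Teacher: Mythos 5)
Your argument is correct and is essentially the intended one: the paper does not prove this lemma itself but quotes it from \cite{BaRa20}, and the proof there is exactly the Mellin-transform/contour-shift computation you describe --- residue $F(z+it)$ at $w=0$ from $\Gamma(w/p)$, the finitely many poles of $F$ giving $r_1$ with the exponential decay of $\Gamma(w_0/p)$ in $|t|$, and the shifted line treated via the functional equation, where the hypothesis $0<\eta<1-x+p-\sigma_a$ is used, just as you use it, to put $1-x+p-\eta$ in the half-plane of absolute convergence of $\tilde F$, with \eqref{zerothgammaest} supplying the exponent $d(\tfrac12+p-x-\eta)$ and the exponential decay of $\Gamma(w/p)$ giving convergence of the $v$-integral. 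The two subtleties you flag (splitting the $v$-integral where $|t+v|$ is not large before invoking the Stirling-type estimate, and the bookkeeping of which poles are crossed) are genuine but routine and are handled in the cited source in the same way, so nothing essential is missing; only your parenthetical remark tying the hypothesis on $\eta$ to the initial interchange of sum and integral is off (that step merely needs the initial line far to the right), but it plays no role in your actual argument.
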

\begin{remark} We can apply the lemma above to $\tilde{F}(s)$ instead of $F(s)$. This yields
\begin{equation}\label{secondbasiclemmaeqn}
\tilde{F}(1-z-it)=\sum_{n=1}^{\infty}\frac{\overline{a_n}e^{-(n/X)^{p}}}{n^{1-z-it}}
+\tilde{r}_1(t,X)+\tilde{r}_2(t,X),
\end{equation}
where $\tilde{r}_i(t,X)$ satisfies the same estimates as $r_i(t,X)$ when $x$ is replaced by $1-x$, for $i=1,2$.
\end{remark}
\begin{remark}
It has been pointed out to me by D. Surya Ramana that the lemma above is valid for 
$0<\eta<p$ if we use the standard convexity bounds for $F(s)$. In this paper 
we will need only the weaker statement made in the lemma.
\end{remark}

\section{Soundararajan's transform}

Suppose that $F(s)\in \asel_d$, with $d\ge 1$. 
For $\alpha\ge 1$ and $T$ chosen large enough that 
$F(1/2+it)$ is holomorphic for $t\ge T$, we define
\begin{equation}\label{hkdefn}
\hktz:=\frac{1}{\sqrt{\alpha}}
\int_{K_T}
F(1/2+it)e^{idt\log\left[\frac{t}{e\alpha}\right]-i\frac{\pi}{4}}dt,
\end{equation}
where $K_T=[2\alpha T,3\alpha T]$. Soundararajan introduced (a mild variant of) this transform for $d=1$ and $F\in \sel$ in \cite{Sound05}, and we used a similar transform in \cite{BaRa20} to study $\asel_d$ when $1<d<2$.
In what follows, $\alpha$ will be fixed, so we will study the behaviour of $\hktz$ as a function of $T$.

We use Lemma \ref{basiclemma} when $z=1/2$.
Substituting for $F(1/2+it)$ from equation \eqref{basiclemmaeqn}, we obtain (for any $X_1>0$),
\begin{equation}\label{bltransform}
\hktz=\frac{1}{\sqrt{\alpha}}\sum_{n=1}^{\infty}\frac{a_n}{\sqrt{n}}e^{-(n/X_1)^{p}}I_n
+R_1(\alpha,T,X_1)+R_2(\alpha,T,X_1),
\end{equation}
where $R_i(\alpha,T,X_1)=\frac{1}{\sqrt{\alpha}}\int_{K_T}r_i(t,X_1)e^{idt\log\left[\frac{t}{e\alpha}\right]-i\frac{\pi}{4}}dt$,
for $i=1,2$, 
\begin{equation}\label{indefn}
I_n=I_n(\alpha,T):=\frac{1}{2\pi i} \int_{K_T}e^{idt\log\left[\frac{t}{e\alpha x_n}\right]
-i\frac{\pi}{4}}dt,
\end{equation}
and $x_n=n^{1/d}$. Using the estimates for $r_1(t,X_1)$ given above, we see that
$R_1(\alpha,T,X_1)=O(X_1^{\sigma_a-1/2}e^{-\alpha T})$. 
We will be choosing $X_1=T^{d+\rho}$ for some $\rho>0$. The term $R_1(\alpha,T,X_1)$ will 
thus have exponential decay in $T$ since $\alpha$ is fixed. Thus we can assume $R_1(\alpha,T,X_1)=O(1)$.

We estimate the term $R_2(\alpha,T,X_1)$ trivially. Indeed, integrating the absolute value of the integrand and using the estimate \eqref{rtwoz}, produces
\[
R_2(\alpha,T,X_1)=O(T^{d(p+1-\eta)}X_1^{-p+\eta}).
\]
Since $\rho>0$, if $p-\eta$ is chosen large enough,  $R_2(\alpha,T,X_1)=O(1)$. 

We record this as a proposition.
\begin{proposition}\label{rprop}
With notation as above, $X_1=T^{d+\rho}$, and for $p-\eta$ chosen large enough, 
\[
R_i(\alpha,T,X_1)=O(1).
\]
for $i=1,2$.
\end{proposition}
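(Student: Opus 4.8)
The plan is to verify the two estimates $R_1(\alpha,T,X_1)=O(1)$ and $R_2(\alpha,T,X_1)=O(1)$ separately, starting from their integral representations $R_i(\alpha,T,X_1)=\frac{1}{\sqrt{\alpha}}\int_{K_T}r_i(t,X_1)e^{idt\log[t/(e\alpha)]-i\pi/4}dt$. The crucial observation throughout is that the exponential factor $e^{idt\log[t/(e\alpha)]-i\pi/4}$ has modulus $1$ on $K_T$, so we may bound $|R_i|$ by $\frac{1}{\sqrt{\alpha}}\int_{K_T}|r_i(t,X_1)|\,dt$, and then use the pointwise bounds on $r_i$ supplied by Lemma \ref{basiclemma} with $z=1/2$ (i.e. $x=1/2$).

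For $R_1$: Lemma \ref{basiclemma} gives $r_1(t,X_1)=O(X_1^{\sigma_a-1/2}e^{-|t|/p})$. Integrating over $K_T=[2\alpha T,3\alpha T]$ (an interval of length $\alpha T$) and using $e^{-|t|/p}\le e^{-2\alpha T/p}$ on $K_T$, we get $|R_1|\ll \alpha^{1/2}T\cdot X_1^{\sigma_a-1/2}e^{-2\alpha T/p}$. Substituting $X_1=T^{d+\rho}$ turns $X_1^{\sigma_a-1/2}$ into a fixed power of $T$, so the whole quantity is a polynomial in $T$ times $e^{-2\alpha T/p}$, which tends to $0$ as $T\to\infty$; in particular it is $O(1)$. (If $F$ is entire at $1/2$, $r_1$ vanishes identically and there is nothing to prove.)

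For $R_2$: Lemma \ref{basiclemma} (equation \eqref{rtwoz}, with $x=1/2$, $p$, $\eta$ as there) gives $r_2(t,X_1)=O\bigl(t^{d(1/2+p-1/2-\eta)}X_1^{-p+\eta}\bigr)=O\bigl(t^{d(p-\eta)}X_1^{-p+\eta}\bigr)$. On $K_T$ we have $t\asymp T$, so $|r_2(t,X_1)|\ll T^{d(p-\eta)}X_1^{-(p-\eta)}$, and integrating over the interval of length $\asymp T$ yields $|R_2|\ll T^{1+d(p-\eta)}X_1^{-(p-\eta)}$. Now plug in $X_1=T^{d+\rho}$: the exponent of $T$ becomes $1+d(p-\eta)-(d+\rho)(p-\eta)=1-\rho(p-\eta)$. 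Since $\rho>0$ is fixed, choosing $p-\eta$ large enough (precisely, $p-\eta>1/\rho$) makes this exponent negative, so $R_2=O(1)$ (in fact $o(1)$). One must also check that this choice is compatible with the hypotheses of Lemma \ref{basiclemma}, namely $0<\eta<1-x+p-\sigma_a=1/2+p-\sigma_a$; since we are free to take $p$ as large as we like and then $\eta$ small, both $p-\eta>1/\rho$ and $\eta<1/2+p-\sigma_a$ can be met simultaneously.

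The argument is essentially bookkeeping: the only real point requiring care is ensuring that the constraints on $p$ and $\eta$ coming from Lemma \ref{basiclemma} leave enough room to also force $\rho(p-\eta)>1$, and that the stated ``$X_1=T^{d+\rho}$'' is exactly what converts the polynomial-in-$T$ growth of the naive bounds into decay. There is no analytic subtlety here — no stationary phase, no cancellation is used, both remainders are handled by the triangle inequality — so I expect no genuine obstacle, only the need to state the admissible range of parameters cleanly so that it can be invoked unchanged in the subsequent stationary-phase analysis of the main term.
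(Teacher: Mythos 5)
Your proof is correct and follows essentially the same route as the paper: bound both remainders trivially by the triangle inequality using the estimates of Lemma \ref{basiclemma} with $x=1/2$, note that the exponential factor $e^{-|t|/p}$ kills the polynomial growth in the $R_1$ term once $X_1=T^{d+\rho}$, and make $R_2$ decay by taking $p-\eta$ large compared to $1/\rho$ (the paper's slightly cruder bound $R_2=O(T^{d(p+1-\eta)}X_1^{-p+\eta})$ requires $p-\eta>d/\rho$, but the conclusion is identical). Your additional check that the choice of $p$ and $\eta$ is compatible with the constraint $0<\eta<1/2+p-\sigma_a$ of Lemma \ref{basiclemma} is a welcome detail that the paper leaves implicit.
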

It remains to evaluate the sum 
appearing in \eqref{bltransform} which we will do in the next section.

\section{Estimating the oscillatory integral $I_n$}
We will require two lemmas for evaluating the oscillatory integrals $I_n$ that appear in equation \eqref{bltransform}. The first is well known, and can be found in Section 1.2 of Chapter VIII in \cite{Stein93}, for instance. It is needed to estimate $I_n$ when $n$ is relatively small or large compared to $T^d$.
\begin{lemma}\label{largen} Suppose that $g(t)$ is a function of bounded variation on 
an interval $K=[a,b]$ and $|g(t)|\le M$ for all $t\in K$. For any ${\mathcal C}^1$-function $f$ on $K$,
if $f^{\prime}(t)$ is monotonic and $|f^{\prime}(t)|\ge m_1$ on $K$,
\[
\int_K g(t)e^{if(t)}dt \ll \frac{1}{m_1}\left\{\vert M\vert +\int_K\vert g^{\prime}(t)\vert dt\right\}.
\]
\end{lemma}
To evaluate the integrals $I_n$ when $n$ has roughly the size $T^d$ we need
Lemma 3.3 of \cite{BoBo04}.
\begin{lemma}\label{midn}
Suppose that $f$ is a ${\mathscr C}^3$-function on an interval 
$K=[a,b]$ and $f^{\prime\prime}(t)\ne 0$ on $K$. 
If $f^{\prime}(c)=0$ for some $c\in K$, and $m>0$ is such 
that  $|f^{\prime\prime\prime}(t)|\le m$ for 
$t\in K\cap \left[c-\left\vert\frac{f^{\prime\prime}(c)}{m}\right\vert,c+\left\vert\frac{f^{\prime\prime}(c)}{m}\right\vert\right]$,
then
\[
\int_K e^{if(t)}dt=e^{\pm i\frac{\pi}{4}}\frac{e^{if(c)}}{\sqrt{|f^{\prime\prime}(c)|}}
+O\left(\frac{m}{|f^{\prime\prime}(c)|^2}\right)
+O\left(\frac{1}{|f^{\prime}(a)|}+\frac{1}{|f^{\prime}(b)|}\right).
\]
The $\pm$ in the expression above occurs according to the sign of $f^{\prime\prime}(c)$.
\end{lemma}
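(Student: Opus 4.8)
The plan is to carry out the classical stationary phase argument, invoking each hypothesis only where it is genuinely available: $f''\neq0$ holds on all of $K$, but the bound $|f'''|\le m$ only on the interval of half-length $\delta:=|f''(c)|/m$ centred at $c$. First I would reduce to the case $f''(c)>0$ by replacing $f$ with $-f$, which conjugates the identity and interchanges $e^{i\pi/4}$ with $e^{-i\pi/4}$. Since $f''$ is continuous and non-vanishing on $K$ and positive at $c$, it is then positive throughout $K$, so $f'$ is strictly increasing; as $f'(c)=0$ this forces $f$ to decrease on $[a,c]$ and increase on $[c,b]$, with $f(t)>f(c)$ for all $t\neq c$. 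One may also assume $a<c<b$, since otherwise $f'(a)$ or $f'(b)$ vanishes and the claimed error term is infinite. Integrating $|f'''|\le m$ shows $\tfrac12 f''(c)\le f''(t)\le\tfrac32 f''(c)$ on $K\cap[c-\tfrac12\delta,\,c+\tfrac12\delta]$.

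Second, I would split $\int_K e^{if(t)}\,dt$ into a \emph{far} part, over $\{t\in K:|t-c|>\tfrac12\delta\}$, and a \emph{near} part, over $K\cap[c-\tfrac12\delta,\,c+\tfrac12\delta]$. For the far part: when $[c+\tfrac12\delta,b]\cap K\neq\emptyset$ we have $[c,c+\tfrac12\delta]\subseteq K$, hence $f'(t)\ge f'(c+\tfrac12\delta)=\int_c^{c+\delta/2}f''(v)\,dv\ge\tfrac14\delta f''(c)=f''(c)^2/(4m)$ on $[c+\tfrac12\delta,b]$, where $f'$ is monotone; Lemma~\ref{largen} with $g\equiv1$ then bounds this piece by $O(m/f''(c)^2)$. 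The interval $[a,c-\tfrac12\delta]\cap K$ is treated symmetrically, so the far part is $O(m/f''(c)^2)$, already within the permitted error.

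Third, on the near part I would substitute $u=\operatorname{sgn}(t-c)\sqrt{f(t)-f(c)}$; by the one-dimensional Morse lemma (smoothness at $u=0$ uses $f\in\mathscr{C}^3$ and $f''(c)\neq0$) this is a $\mathscr{C}^1$ diffeomorphism onto an interval $[u_-,u_+]$ with $u_-<0<u_+$, and it turns the near part into
\[
e^{if(c)}\int_{u_-}^{u_+}e^{iu^2}\,\Psi(u)\,du,\qquad\Psi(u)=\frac{dt}{du}=\frac{2u}{f'(t(u))},\quad\Psi(0)=\sqrt{2/f''(c)}.
\]
Writing $\Psi=\Psi(0)+(\Psi-\Psi(0))$, the $\Psi(0)$ term contributes $\Psi(0)\int_{\mathbb{R}}e^{iu^2}\,du=\sqrt{2\pi/f''(c)}\,e^{i\pi/4}$ — the asserted main term, the universal constant being $\int_{\mathbb{R}}e^{iu^2}\,du=\sqrt{\pi}\,e^{i\pi/4}$ — plus the two tail integrals $\Psi(0)\int_{u_+}^{\infty}e^{iu^2}\,du$ and $\Psi(0)\int_{-\infty}^{u_-}e^{iu^2}\,du$, each $O(\Psi(0)/|u_\pm|)$ after one integration by parts. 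This is where the error term does its bookkeeping: $u_+$ arises either from the cutoff $t=c+\tfrac12\delta$, in which case $|u_+|\gg f''(c)^{3/2}/m$ and the tail is $O(m/f''(c)^2)$, or from the endpoint $t=b$ (when $b<c+\tfrac12\delta$), in which case $f''\asymp f''(c)$ on $[c,b]$ gives $|u_+|\gg|f'(b)|/\sqrt{f''(c)}$ and the tail is $O(1/|f'(b)|)$; symmetrically for $u_-$ and $a$.

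Finally I would bound the remainder $\int_{u_-}^{u_+}e^{iu^2}(\Psi(u)-\Psi(0))\,du$. Its contribution from $|u|\le1$ is $O(\|\Psi'\|_\infty)$ directly, with no oscillation needed. For $1\le|u|\le|u_\pm|$ one integrates by parts twice, using $|(\Psi(u)-\Psi(0))/u|\le\|\Psi'\|_\infty$ together with the derivative estimates $\|\Psi'\|_\infty=O(m/f''(c)^2)$ and $|\Psi''(u)|=O\bigl(m/(f''(c)^2|u|)\bigr)$; these follow from a Taylor expansion of $f$ about $c$ in which the quadratic terms cancel (so that, with $h=t-c$, the numerator $2f'(t)^2-4(f(t)-f(c))f''(t)$ is $O(f''(c)\,m\,h^3)$ rather than $O(f''(c)^2 h^2)$), and they make every resulting boundary and integral term $O(m/f''(c)^2)$, with no logarithmic loss. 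Combining the far part, the main term, the two tails and this remainder yields the lemma. I expect the main difficulty to lie not in any single estimate but in this organisation: keeping track of which of $O(m/|f''(c)|^2)$, $O(1/|f'(a)|)$, $O(1/|f'(b)|)$ absorbs each boundary contribution, according to the position of $[a,b]$ relative to $[c-\tfrac12\delta,\,c+\tfrac12\delta]$, and verifying the quadratic cancellation that controls $\Psi'$ and $\Psi''$.
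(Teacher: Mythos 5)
Your argument is correct, but it is worth saying up front that the paper does not prove Lemma \ref{midn} at all: it is imported verbatim as Lemma 3.3 of \cite{BoBo04}, so there is no internal proof to compare against. What you supply is a self-contained derivation by the standard stationary-phase/van der Corput route: reduction to $f''(c)>0$, the split at $|t-c|=\tfrac12\delta$ with $\delta=|f''(c)|/m$, the first-derivative test (Lemma \ref{largen}) on the far range using $f'(c\pm\tfrac12\delta)\gg f''(c)^2/m$, and on the near range the Morse-type substitution $u=\operatorname{sgn}(t-c)\sqrt{f(t)-f(c)}$ followed by the Fresnel integral and integration by parts. I checked the two technical points on which your error bookkeeping rests and they are sound: the identity $\Psi'(u)=2\bigl[f'(t)^2-2(f(t)-f(c))f''(t)\bigr]/f'(t)^3$ together with the Taylor cancellation indeed gives $\|\Psi'\|_\infty\ll m/f''(c)^2$ on the near range (and, since $N'(t)=-4(f(t)-f(c))f'''(t)$, also $|\Psi''(u)|\ll m/(f''(c)^2|u|)$, which is exactly what removes the logarithm after the second integration by parts); and the dichotomy for $u_\pm$ correctly routes the tail either into $O(m/f''(c)^2)$ (cutoff at $c\pm\tfrac12\delta$) or into $O(1/|f'(a)|+1/|f'(b)|)$ (endpoint of $K$). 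What your approach buys is precisely this transparency about which hypothesis feeds which error term, at the cost of length; the paper buys brevity by citation. One caveat: the main term you actually obtain is $\sqrt{2\pi/|f''(c)|}\,e^{if(c)\pm i\pi/4}$, which is the correct stationary-phase constant but differs by the factor $\sqrt{2\pi}$ from the statement as printed here (which, taken literally, fails already for $f(t)=\lambda(t-c)^2/2$ on a long interval); so you should not claim the two expressions coincide, but rather note that the printed statement drops a harmless absolute constant, which is immaterial in the application to Proposition \ref{inest} and to the $\Omega$-result of Theorem \ref{mainthm}.
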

We recall that $K_T=[2\alpha T,3\alpha T]$. In the notation of the lemmas above,
\[
I_n=\int_{K}g(t)e^{if(t)}dt,
\]
where $K=K_T$, $g(t)\equiv 1$ and
\[
f(t)=dt\log\left[\frac{t}{e\alpha n^{\frac{1}{d}}}\right]-\frac{i\pi}{4}.
\]
\begin{proposition}\label{inest}
For $n\le T^d$ and $4T^d\le n<T^{d+\rho}$, 
\begin{equation}\label{insmalllarge}
I_n=O(1).
\end{equation}
If $T^d<n<4T^d$, 
\begin{equation}\label{inmid}
I_n=\sqrt{\alpha}d^{-\frac{1}{2}}n^{1/2d}e^{id\alpha n^{1/d}}+O(1).
\end{equation}
\end{proposition}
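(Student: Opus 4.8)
The plan is to read off the first three derivatives of $f$, locate its stationary point, and then invoke Lemma \ref{largen} when that point lies outside $K_T$ and Lemma \ref{midn} when it lies inside. Writing $x_n=n^{1/d}$, a direct computation gives
\[
f'(t)=d\log\frac{t}{\alpha x_n},\qquad f''(t)=\frac{d}{t},\qquad f'''(t)=-\frac{d}{t^2},
\]
so $f$ has the single stationary point $c=\alpha x_n=\alpha n^{1/d}$ on $(0,\infty)$, at which $f''(c)=d/(\alpha x_n)>0$ and $f(c)=d\,c\log(1/e)=-d\alpha n^{1/d}$, the integrand additionally carrying the fixed factor $e^{-i\pi/4}$. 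Since $K_T=[2\alpha T,3\alpha T]$, the point $c$ lies in $K_T$ exactly when $x_n\in[2T,3T]$; the three assertions of the proposition correspond to $c$ lying to the left of $K_T$ (the small-$n$ range), inside $K_T$ (the intermediate range), and to the right of $K_T$ (the large-$n$ range), respectively.

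In the two outer ranges $f''>0$ forces $f'$ to be monotone on $K_T$, and $|f'(t)|=d\,|\log(t/(\alpha x_n))|$ is bounded below by a positive absolute constant there, because the ratio $t/(\alpha x_n)$ stays bounded away from $1$ throughout $K_T$ (it is $\ge 2$ for small $n$, and for large $n$ it is bounded above by a constant strictly less than $1$, moving further from $1$ still as $n\to T^{d+\rho}$). Hence Lemma \ref{largen}, applied with $g\equiv 1$ (so $M=1$ and $\int_{K_T}|g'|=0$) and $m_1$ equal to that lower bound, gives at once $I_n=O(1)$, which is \eqref{insmalllarge}.

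In the intermediate range $c\in K_T$ and I would apply Lemma \ref{midn} with $m=\sup_{t\in K_T}|f'''(t)|\asymp d/(\alpha T)^2$. Its main term is $e^{+i\pi/4}e^{if(c)}/\sqrt{|f''(c)|}$, the $+$ sign because $f''(c)>0$; since $1/\sqrt{|f''(c)|}=\sqrt{\alpha x_n/d}=\sqrt{\alpha}\,d^{-1/2}n^{1/(2d)}$, combining this with $e^{if(c)}$ coming from $f(c)=-d\alpha n^{1/d}$ and with the fixed factor $e^{-i\pi/4}$ and the $1/(2\pi i)$ normalization in \eqref{indefn} yields the main term claimed in \eqref{inmid}. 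For the two error terms: $m/|f''(c)|^2\asymp(d/(\alpha T)^2)(\alpha x_n/d)^2=x_n^{2}/(dT^2)\asymp 1/d=O(1)$ since $x_n\asymp T$ here, while $1/|f'(2\alpha T)|+1/|f'(3\alpha T)|=O(1)$ because $f'$ is bounded away from $0$ at both endpoints of $K_T$ once $c$ lies in its interior. This gives \eqref{inmid}.

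The step I expect to cause the most trouble is, first, propagating the constants and arguments correctly through the various normalizations — the $e^{-i\pi/4}$ in the integrand, the $1/(2\pi i)$, the $e^{\pm i\pi/4}$ and the sign convention of Lemma \ref{midn}, and the value $e^{if(c)}=e^{-id\alpha n^{1/d}}$ — so that the main term emerges precisely as written rather than off by a unit-modulus factor; and, second, the handful of $n$ for which $c$ sits near an endpoint of $K_T$, where the uniform lower bound for $|f'|$ on $K_T$ degenerates and neither lemma delivers an $O(1)$ remainder on the nose. For those $n$ I would argue directly — for instance by a second-order Taylor expansion of $f$ about $c$ on a short sub-interval, combined with Lemma \ref{largen} on the remainder of $K_T$ — that $I_n$ still has the claimed shape up to $O(1)$, or else observe that they fall outside the stated $n$-ranges. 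Everything else is routine estimation.
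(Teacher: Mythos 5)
Your argument is essentially the paper's own proof: the same derivative computations, Lemma \ref{largen} on the two outer ranges via a positive lower bound for $|f'|$, and Lemma \ref{midn} at the stationary point $c=\alpha n^{1/d}$ with the same $O(1)$ bounds on the error terms (the paper takes $m=3d/c^2$ rather than $\sup_{K_T}|f'''|$, an immaterial difference, and its proof, like your computation, produces the phase $e^{-id\alpha n^{1/d}}$, so the sign in \eqref{inmid} is a typo in the statement). The endpoint/range issue you flag at the end is genuine but is not treated in the paper either: its proof simply asserts $|f'(t)|\ge d\log(4/3)$ for $4T^d\le n<T^{d+\rho}$ and applies the stationary-phase lemma on all of $T^d<n<4T^d$, even though $c\in K_T$ only when $2^dT^d\le n\le 3^dT^d$ (so for $d\ge 2$ the stated ranges and the location of $c$ do not match), hence relative to the published argument you are not missing anything.
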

\begin{proof} 
We follow the proof (for $d=1$) in \cite{BaRa20}. Indeed, we have
\[
f^{\prime}(t)=d\log\left[\frac{t}{\alpha n^{\frac{1}{d}}}\right],\,\,f^{\prime\prime}(t)=d/t\,\,\text{and}\,\, 
f^{\prime\prime\prime}(t)=-d/t^2.
\]
If $n\le T^d$, then $|f^{\prime}(t)|\ge d\log 2$. Similarly,
if $4T^d\le n<T^{d+\rho}$, $|f^{\prime}(t)|\ge d\log 4/3$.
Then Lemma \ref{largen} shows that $I_n=O(1)$, and \eqref{insmalllarge} follows.

If $T^d<n<4T^d$, we proceed as follows. Note that $f^{\prime}(c)=0$ 
means that $c=\alpha n^{1/d}$. The first term on the right in Lemma \ref{midn} thus yields $\sqrt{\alpha}d^{-\frac{1}{2}}n^{1/2d}e^{-id\alpha n^{1/d}}$. Now choose $m=3d/c^2$,
so $f^{\prime\prime}(c)/m=c/3$. If $t\in K_T\cap [2c/3,4c/3]$,
$|f^{\prime\prime\prime}(t)|=9d/4c^2\le m$. Thus, the hypotheses of Lemma \ref{midn} are satisfied. The first error term in the lemma yields
\[
O\left(\frac{m}{|f^{\prime\prime}(c)|^2}\right)=O(1),
\]
while the last two error terms also yield $O(1)$. This proves \eqref{inmid}.
\end{proof}

Note that when estimating the sum in equation \eqref{bltransform}, it is enough to estimate the sum for 
$n<T^{d+\rho}$, since the terms in the sum decay exponentially when $n$ exceeds this. Using the 
the estimates \eqref{insmalllarge} and \eqref{inmid} in the sum in equation \eqref{bltransform} 
yields (for $X_1\ge T^{d+\rho}$)
\begin{equation}\label{interimeqn}
\frac{1}{\sqrt{\alpha}}\sum_{n=1}^{\infty}\frac{a_n}{\sqrt{n}}e^{-(n/X_1)^{p}}I_n
=\sum_{T^d<n<4T^d}\frac{a_n}{\sqrt{n}}e^{-(n/X_1)^{p}}d^{-\frac{1}{2}}n^{1/2d}e^{-id\alpha n^{1/d}}
+O(X_1^{\sigma_a-\frac{1}{2}+\varepsilon}),
\end{equation}
for any $\varepsilon>0$.
Combining equation \eqref{interimeqn} with the estimates in Proposition \ref{rprop}, we get the following proposition.
\begin{proposition}\label{firstestimate}
For any $\varepsilon>0$, we have
\begin{equation}\label{hkfirsteqn}
\hktz=\sum_{T^d<n<4T^d}\frac{a_n}{\sqrt{n}}d^{-\frac{1}{2}}n^{1/2d}e^{-(n/T^{d+\rho})^{p}}e^{-id\alpha n^{1/d}}
+O(T^{(d+\rho)(\sigma_a-\frac{1}{2}+\varepsilon)}).
\end{equation}
\end{proposition}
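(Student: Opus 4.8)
The plan is to assemble the three ingredients already established. Starting from the decomposition \eqref{bltransform} of $\hktz$, obtained by applying Lemma \ref{basiclemma} at $z=1/2$, I would specialise $X_1=T^{d+\rho}$ and substitute the evaluation \eqref{interimeqn} of the arithmetic sum together with the remainder estimates of Proposition \ref{rprop}. This gives
\[
\hktz=\sum_{T^d<n<4T^d}\frac{a_n}{\sqrt n}\,d^{-\frac{1}{2}}n^{1/2d}e^{-(n/T^{d+\rho})^{p}}e^{-id\alpha n^{1/d}}+O\!\left(T^{(d+\rho)(\sigma_a-\frac{1}{2}+\varepsilon)}\right)+O(1),
\]
and since $\sigma_a\ge 1/2$ the $O(1)$ term is absorbed into the power of $T$, which is the assertion; the various $\varepsilon$'s introduced along the way are consolidated by a final renaming.

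For completeness one should indicate how \eqref{interimeqn} itself follows from Proposition \ref{inest}, since that is where whatever content remains sits. The sum $\frac{1}{\sqrt\alpha}\sum_{n\ge1}\frac{a_n}{\sqrt n}e^{-(n/X_1)^{p}}I_n$ (with $X_1=T^{d+\rho}$) is split into the ranges $n\le T^d$, $T^d<n<4T^d$, $4T^d\le n<T^{d+\rho}$ and $n\ge T^{d+\rho}$. In the first, third, and central ranges $I_n$ is controlled by \eqref{insmalllarge} and \eqref{inmid}; in the last range the phase $f$ satisfies $|f'(t)|\gg\log T$ on $K_T$, so Lemma \ref{largen} gives $I_n=O(1)$ and, in addition, the factor $e^{-(n/X_1)^{p}}$ secures convergence of the tail. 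All contributions outside the central range, together with the $O(1)$ part of $I_n$ inside it, are then $\ll\sum_n\frac{|a_n|}{\sqrt n}e^{-(n/X_1)^{p}}$; invoking the elementary bound $\sum_{n\le X}|a_n|\ll X^{\sigma_a+\varepsilon}$ coming from absolute convergence, partial summation (with a dyadic splitting of the range $n\ge X_1$ to absorb the Gaussian) estimates this by $\ll X_1^{\sigma_a-\frac{1}{2}+\varepsilon}=T^{(d+\rho)(\sigma_a-\frac{1}{2}+\varepsilon)}$. In the central range $T^d<n<4T^d$, the main term of $I_n$ from \eqref{inmid}, multiplied by $\frac{1}{\sqrt\alpha}\frac{a_n}{\sqrt n}e^{-(n/X_1)^{p}}$, reproduces precisely the displayed sum.

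I do not anticipate a genuine difficulty at this stage: the analytic substance lies upstream, in the contour shift underlying Lemma \ref{basiclemma} and in the stationary-phase estimates of Proposition \ref{inest}, both of which may be taken as given. The only points requiring mild care are the tail $n\ge X_1$ — where one must check that the non-stationary bound of Lemma \ref{largen} and the Gaussian cutoff together keep the contribution within budget — and the uniformity of the estimate $\sum_{n<X}|a_n|n^{-1/2}\ll X^{\sigma_a-1/2+\varepsilon}$, in particular in the borderline case $\sigma_a=1/2$, where a logarithmic factor is swallowed by $\varepsilon$.
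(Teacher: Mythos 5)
Your proposal is correct and follows essentially the same route as the paper: substitute the estimates of Proposition \ref{inest} into \eqref{bltransform} with $X_1=T^{d+\rho}$, bound the non-central ranges and the $O(1)$ parts by $\sum_n|a_n|n^{-1/2}e^{-(n/X_1)^p}\ll X_1^{\sigma_a-\frac{1}{2}+\varepsilon}$, and add the $O(1)$ remainders from Proposition \ref{rprop}, which are absorbed since $\sigma_a\ge 1/2$. Your explicit treatment of the tail $n\ge T^{d+\rho}$ (via $|f'(t)|\gg\log T$ and the exponential cutoff) merely spells out what the paper dispatches with the remark that those terms decay exponentially.
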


\section{A second estimate for $\hktz$}
We now evaluate the transform $\hktz$ in a second way, hewing to Soundararajan's arguments in \cite{Sound05} for $d=1$. Applying the functional equation to the integrand, and then using equation 
\eqref{fourthgammaest} for $\tilde{F}(1/2-it)$, gives
\[
\hktz=\frac{\omega e^{iB}}{\sqrt{\alpha}}\int_{K_T}
\tilde{F_2}(1/2-it)(CQ^2\alpha^d)^{-it}t^{iA}\left[1+O(1/t)\right]dt.
\]
Using equation \eqref{secondbasiclemmaeqn}, we obtain
\begin{flalign}\label{postfehk}
\hktz=\frac{\omega e^{iB}}{\sqrt{\alpha}}\int_{K_T}&
\left[\sum_{n=1}^{\infty}\frac{\overline{a_n}}{\sqrt{n}}e^{-(n/X_2)^p}
+\tilde{r}_1(t,X_2)+\tilde{r}_2(t,X_2)\right]\nonumber\\
&\times(n^{-1}CQ^2\alpha^d)^{-it}t^{iA}\left[1+O(1/t)\right]dt.
\end{flalign}
for $X_2>0$. Imitating the arguments used for majorising $R_i(\alpha,T,X_1)$ in Proposition
\ref{rprop}, the terms 
$\tilde{R}_i(\alpha,T,X_2)=\int_{K_T}\tilde{r}_i(t,X_2)(CQ^2\alpha^d)^{-it}t^{iA}dt$ 
yield $O(1)$ when estimated trivially, if $X_2=T^{d+\rho}$ for some $\rho>0$,
and $p-\eta$ is chosen large enough. We also have
\[
\int_{K_T}\tilde{r}_i(t,X_2)O(1/t)dt \ll \tilde{R}_i(\alpha,T,X_2)=O(1)
\]
for $i=1,2$.

We switch the order of summation and integration in the first term of \eqref{postfehk}  to get 
the expression
\begin{equation}\label{suminpostfehk}
\frac{\omega e^{iB}}{\sqrt{\alpha}}\sum_{n=1}^{\infty}\frac{\overline{a_n}}{\sqrt{n}}e^{-(n/X_2)^p}J_n,
\end{equation}
where
\[
J_n=\int_{K_T}(n^{-1}C\pi Q^2\alpha)^{-it}t^{iA}dt.
\]
As before, it is enough to evaluate or estimate this sum when $n<X_2^{1+\varepsilon}$.
Choose $m$ such $a_m\ne 0$, and fix $\alpha$ so that $m=CQ^2\alpha^d$. Then
$J_m$ can be evaluated exactly to give
\[
J_m=\frac{(3^{1+iA}-2^{1+iA})}{1+iA}\cdot\alpha^{1+iA}T^{1+iA}.
\]
For $n\ne m$, we use integration by parts to estimate $J_n$. We have
\[
J_n=O(1/\log(n^{-1}CQ^2\alpha^d)=O(1)
\]
Let 
\[
\kappa=\omega e^{iB}\sqrt{C}Q\alpha^{\frac{1-d}{2}+iA}\frac{(3^{1+iA}-2^{1+iA})}{1+iA}
\]
Substituting for $J_n$ in \eqref{suminpostfehk}, we obtain (for any $\varepsilon>0$)
\[
\frac{\omega e^{iB}}{\sqrt{\alpha}}\sum_{n=1}^{\infty}\frac{\overline{a_n}}{\sqrt{n}}e^{-(n/T^{d+\rho})^p}J_n
=\kappa a_mT^{1+iA}+O(T^{(d+\rho)(\sigma_a-\frac{1}{2}+\varepsilon)})
\]
when $X_2=T^{d+\rho}$. The sum involving 
$\int_{K_T}(n^{-1}C\pi Q^2\alpha)^{-it}t^{iA}O(1/t)dt$ is dominated by the sum involving $J_n$ above.
We consolidate the arguments in this section as
\begin{proposition}\label{hksecond}
Suppose that  $a_m\ne 0$ and $\alpha$ is chosen so that $m=CQ^2\alpha^d$. Then,
\begin{equation}\label{hksecondeqn}
\hktz=\kappa a_mT^{1+iA}+O(T^{(d+\rho)(\sigma_a-\frac{1}{2}+\varepsilon)}).
\end{equation}
\end{proposition}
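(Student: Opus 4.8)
The plan is to derive Proposition~\ref{hksecond} by running Soundararajan's second evaluation of $\hktz$ in parallel with the first one from Section~5. Concretely, I would start from the definition \eqref{hkdefn} and apply the functional equation \eqref{fnaleqn} to rewrite $F(1/2+it)$ in terms of $\tilde F(1/2-it)$ and the ratio of gamma factors $\tilde G(1-s)/G(s)$. Invoking the asymptotic \eqref{fourthgammaest} at $x=1/2$, the oscillatory factor $e^{idt\log[t/(e\alpha)]}$ built into $\hktz$ should cancel against the $e^{-itd\log(t/e)}$ coming from the gamma ratio, leaving behind the clean exponential $(CQ^2\alpha^d)^{-it}t^{iA}e^{iB}$ together with a multiplicative error $1+O(1/t)$. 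This is the step where I expect to have to be careful: I must check that the phases match exactly (so that no residual $t\log t$ term survives) and that the various lower-order constants ($A$, $B$, $C$, the factors of $Q$ and $\alpha$) are bookkept correctly, since these are precisely what ends up in $\kappa$.

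Next I would substitute the everywhere-convergent expansion \eqref{secondbasiclemmaeqn} for $\tilde F(1/2-it)$, with $X_2=T^{d+\rho}$, producing the main sum \eqref{suminpostfehk} plus remainder integrals $\tilde R_i(\alpha,T,X_2)$. The remainders are handled exactly as in Proposition~\ref{rprop}: the estimate for $\tilde r_1$ gives exponential decay in $T$ (because $\alpha$ is fixed and $K_T=[2\alpha T,3\alpha T]$), and the estimate \eqref{rtwoz} for $\tilde r_2$ gives $O(1)$ once $p-\eta$ is taken large enough; moreover multiplying the integrand by the harmless $O(1/t)$ factor only improves these bounds. For the main sum I switch summation and integration (justified by absolute convergence after the $e^{-(n/X_2)^p}$ smoothing) and am left with $\sum_n \overline{a_n} n^{-1/2} e^{-(n/X_2)^p} J_n$, where $J_n=\int_{K_T}(n^{-1}CQ^2\alpha^d)^{-it}t^{iA}\,dt$. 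Note that only $n<X_2^{1+\varepsilon}$ contribute, the tail being killed by the exponential weight.

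The decisive move is the choice of $\alpha$: pick any $m$ with $a_m\neq 0$ and set $\alpha=(m/(CQ^2))^{1/d}$, so that the logarithmic phase in $J_m$ vanishes identically and $J_m=\int_{K_T}t^{iA}\,dt$ can be integrated in closed form, giving $\frac{3^{1+iA}-2^{1+iA}}{1+iA}\alpha^{1+iA}T^{1+iA}$ after the substitution $t\mapsto \alpha t$. For every $n\neq m$ the phase $(n^{-1}CQ^2\alpha^d)^{-it}=(n/m)^{it}$ is genuinely oscillatory with derivative $\log(m/n)$ bounded away from $0$, so integration by parts (or Lemma~\ref{largen} with $f'(t)=\log(m/n)$ constant and $g(t)=t^{iA}$ of bounded variation with $\int_{K_T}|g'|\ll 1$) yields $J_n=O(1)$ uniformly. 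Feeding these into the sum, the $n=m$ term contributes $\kappa a_m T^{1+iA}$ with $\kappa=\omega e^{iB}\sqrt{C}\,Q\,\alpha^{\frac{1-d}{2}+iA}\frac{3^{1+iA}-2^{1+iA}}{1+iA}$ (absorbing $\omega e^{iB}/\sqrt\alpha$, the $\overline{a_m}/\sqrt m$ normalization — using $\overline{a_m}$ versus $a_m$ needs a small conjugation check — and the $J_m$ value), while the remaining terms are bounded by $\sum_{n<X_2^{1+\varepsilon},\,n\neq m}|a_n|n^{-1/2}\cdot O(1)\ll X_2^{\sigma_a-1/2+\varepsilon}=T^{(d+\rho)(\sigma_a-1/2+\varepsilon)}$ via a trivial bound on $\sum_{n<Y}|a_n|\ll Y^{\sigma_a+\varepsilon}$. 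Combining with the $O(1)$ remainder terms gives \eqref{hksecondeqn}.

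The main obstacle, as indicated, is the phase bookkeeping in the first step: making sure that applying \eqref{fourthgammaest} inside \eqref{hkdefn} really does produce exactly the exponent $(CQ^2\alpha^d)^{-it}t^{iA}$ with the stated constants and no leftover $t\log t$ contribution, and that the $O(1/t)$ error, once integrated against the main sum over $K_T$, still only contributes $O(1)$ — which follows because $\int_{K_T}\frac{1}{t}\,dt\ll 1$ and the $n=m$ piece with the extra $1/t$ is $O(\log T/T)\cdot|T^{1+iA}|$-type, hence negligible, while for $n\neq m$ one repeats the integration-by-parts estimate with $g(t)=t^{iA}/t$. Everything else is a routine repetition of the techniques already established in Sections~4 and~5.
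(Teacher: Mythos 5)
Your proposal follows essentially the same route as the paper: apply the functional equation together with \eqref{fourthgammaest} at $x=1/2$ so that the phase of the kernel cancels the $e^{-itd\log(t/e)}$ from the gamma ratio, insert the smoothed expansion \eqref{secondbasiclemmaeqn} with $X_2=T^{d+\rho}$, bound the remainders as in Proposition \ref{rprop}, evaluate $J_m$ exactly via the choice $m=CQ^2\alpha^d$, and control $J_n$ for $n\neq m$ by the first-derivative test, giving the error $O(T^{(d+\rho)(\sigma_a-\frac{1}{2}+\varepsilon)})$. The details you flag (the uniformity of the $J_n=O(1)$ bound, the $O(1/t)$ factor, and the $a_m$ versus $\overline{a_m}$ bookkeeping in $\kappa$) are handled correctly and match, or slightly sharpen, the paper's own treatment.
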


\section{The proof of Theorem \ref{mainthm}}\label{last}
We now have all the estimates necessary to prove Theorem \ref{mainthm}.
Equating \eqref{hkfirsteqn} and \eqref{hksecondeqn} gives us  
\[
\sum_{T^d<n<4T^d}\frac{a_n}{\sqrt{n}}n^{1/2d}e^{-(n/T^{d+\rho})^{p}}e^{-id\alpha n^{1/d}}
=\kappa d^{\frac{1}{2}}a_mT^{1+iA}+O(T^{(d+\rho)(\sigma_a-\frac{1}{2}+\varepsilon)}).
\]
This can be rewritten as
\begin{equation}\label{finalequality}
\sum_{T<n<4T}a_ne^{-(n/T^{d+\rho})^{p}}e^{-id\alpha n^{1/d}}
=\kappa d^{\frac{1}{2}}a_mT^{\frac{1}{2}+\frac{1}{2d}+iA}+O(T^{(1+\frac{\rho}{d})(\sigma_a-\frac{1}{2}+\varepsilon)
+\frac{1}{2}-\frac{1}{2d}}).
\end{equation}
Suppose that $F(s)$ converges absolutely when $\re(s)= 1/2+1/2d$, so $\sigma_a\le 1/2+1/2d$. If $\rho$ and $\varepsilon$ are chosen small enough, we see that the second term on the right hand side of \eqref{finalequality} is actually $o(T^{\frac{1}{2}+\frac{1}{2d}})$. But then, for $T$ large enough,
\[
\sum_{T<n<4T}|a_n|\ge \left\vert\sum_{T<n<4T}a_ne^{-(n/T^{(d+\rho)})^{p}}e^{-id\alpha n^{1/d}}\right\vert\ge 2^{-1}|\kappa d^{\frac{1}{2}}a_m|T^{\frac{1}{2}+\frac{1}{2d}}.
\]
This contradicts the assumption that $F(s)$ converges absolutely when $\re(s)=1/2+1/2d$,
and Theorem \ref{mainthm} follows.

\bibliographystyle{alpha}
\bibliography{../../../../../Bibtex/master2020}

\end{document}